\newtheorem{theorem}{Theorem}
\newtheorem{lemma}[theorem]{Lemma}
\newtheorem{corollary}[theorem]{Corollary}
\newtheorem{remark}[theorem]{Remark}
\def\Erl{{\rm Erlang}}
\def\E{{\mathbb E}}
\def\N{{\mathbb N}}
\def\Pr{{\mathbb P}}
\newcommand{\qix}{l}
\newcommand{\geqSt}{\geq_{\mbox{st}}}
\DeclareMathOperator*{\argmin}{arg\,min}
\begin{document}

\title[Pure threshold strategies]{Pure threshold strategies for a two-node tandem network under partial information}

\author{Bernardo D'Auria}
\address{Statistics Department, Madrid University Carlos III, Legan\'es, Spain \\ ICMAT, Madrid University Carlos III, Legan\'es, Spain}
\email{bernardo.dauria@uc3m.es}
\thanks{The research of the first author is partially supported by the Spanish Ministry of Education and Science Grants MTM2013-42104-P (FEDER funds), MTM2010-16519 and RYC-2009-04671. The authors would like to thank the anonymous referee for the insightful comments that helped in improving the correctness of the statement of the result.}

\author{Spyridoula Kanta}
\address{Statistics Department, Madrid University Carlos III, Legan\'es, Spain}

\begin{abstract}
In a two node tandem network, customers decide to join or balk by maximizing a given profit function whose costs are proportional to the sojourn time they spend at each queue.
Assuming that their choices are taken without knowing the complete state of the system, we show that a pure threshold equilibrium policy exists. 
In particular we analyze the case when the partial information consists in informing the arrival customers of the total number of users in the network.
\end{abstract}

\keywords{Tandem network, Individual optimization,  Partial information, Threshold strategy.}

\date{}

\maketitle

\section{Introduction}\label{sc:introduction}
Queueing literature is recently devoting an increasing attention to the economic analysis of queueing systems.
Indeed in real applications it is not uncommon that the input to a queueing system is not exogenously defined and is the result of the combined effect of the decisions made by the arriving customers.
They may decide whether to join or balk the system according to their convenience and these choices in general lead to a final equilibrium. 
This phenomenon is mathematically modeled by assuming they are rationally optimizing a given individual profit function.
This research, started in the '70s by \cite{naor:1969} and \cite{edelson:hildebrand:1975}, now has reached a good maturity, two central monographs are \cite{hassin:haviv:2003} and \cite{stidham:2009}.
Most of the literature focuses on a single server system, while we focus here on network models, in particular a series of two $M/M/\cdot$ queues.
Previous studies have looked at parallel queues \cite{whitt:1986}, \cite{hassin:1996} and \cite{haviv:zlotnikov:2011} and for more general topologies extensive studies have been done in the field of telecommunications, see \cite{courcoubetis:weber:2003,roughgarden:2005}. 
A close model is \cite{burnetas:2011}, where a series of queues of $M/M/m$ types is analyzed and the form of the symmetric customer equilibrium is derived together with the explicit socially optimal strategies. 
The main difference with our model is that there customers make their decisions without getting any information on the state of system, while here they know the total number of customers already inside.
Usually network models show an intrinsic difficulty in getting explicit results, and this partially explains a relatively scarcer literature.
The two node tandem network that we study has the advantage of being simpler and allowing a complete analysis. 
Customers make the decisions to balk or join after knowing how many customers are already in the network. 
In real applications, it is common that  people do not know the complete information on the state of the system, 
as usually this information is shortly summarized to simplify the decision process.
Examples may be found  in healthcare systems, where treatment requires two different steps, such as a first queue to get a doctor reservation and a second queue to be attended by  the doctor.
The interesting result is that the partial information setting simplifies drastically the analysis, allowing  to get for this specific case explicit results.

\par
The model is introduced in Section \ref{sc:model},
we compute in Section  \ref{sc:fully-observable}  the expected sojourn time of an arriving customer assuming that the full state of the system is known.
In Section \ref{sc:dist-total-queue} the same analysis is done when the arriving customers are informed about  the total number of customers in the network.
Finally  we compute the equilibrium strategy in  Section \ref{sc:partial-total-queue}.

\section{The model}\label{sc:model}
We consider a tandem network 
with two single server nodes with infinite buffers and service times independent and exponentially distributed. 
Using the index $\qix$, with {$\qix=1$ or $2$}, to refer to the first or the
second node, we denote by $\mu_\qix$ the service rate at node $\qix$.
Customers arrive to the system according to a Poisson process with rate $\lambda$ and before joining the network  they receive partial information about the state of the system.
The state space is $\N^2$, that is all possible pairs $(Q_1, Q_2)$ 
with $Q_\qix$ the queue length at node $\qix$. 
A \emph{tagged customer} that just arrives,  gets a reword $R$ for joining the network, and pays for each unit of sojourn time at node $\qix$ a cost $C_\qix$ with a resulting random profit given by
$P = R - C_1 \, S_1 - C_2 \, S_2 \ ,$
where $S_\qix$ denotes the sojourn times she would spend at node $\qix$.

The tagged user makes her decision by optimizing the expected profit given the information she receives at her arrival time, $k=Q_1+Q_2$, that is
\begin{equation}\label{eq:profit}
P_K(k) = R - C_1 \, T_{K,1}(k) - C_2\, T_{K,2}(k) \ ,
\end{equation}
with $T_{K,\qix} = \E_K[S_\qix|Q_1 + Q_2=k]$. The subindex $K$ tells that the rest of the population is using a pure threshold strategy with threshold $K$ in joining the queue. That is all users besides the tagged one join the network if and only if it contains less than $K$ customers. 

The main result of the paper is to show that the tandem network admits a pure threshold $K$,  that is there exists a $K\in\N$ such that
$$
P_K(k) \geq 0 \mbox{ as } k < K 
\quad \mbox{ and } \quad 
P_K(k) < 0 \mbox{ as } k \ge K \ .
$$
\begin{remark} 
By using the subindex $K$, we are implicitly assuming that the rest of population is not allowed to use strategies different from a pure threshold one. This assumption is not restrictive for our purposes, but it does not preclude the existence of policies (even of equilibrium type) that are of a different form.
\end{remark}
\begin{remark} 
We always assume that $R>C_1/\mu_1+C_2/\mu_2$. Being this relation false, a user would get negative net profit even joining an empty network implying a unique equilibrium given by the empty system.
\end{remark}

Before analyzing the described model, we first study the case when the complete information is available to the arriving customers. This is done in the next section.
\section{Mean sojourn times}\label{sc:fully-observable}
Let  $S_\qix(n,m)$ be the sojourn time spent at queue $\qix$ by a tagged customer  
that joins a system being in state $(n-1,m)$, that is she is going to occupy position $n$ in the first queue.
Let $T_\qix(n,m)=E[S_\qix(n,m)]$ be the corresponding expectation and  
$T(n,m)=T_1(n,m)+T_2(n,m)$ the total expected sojourn time.
The sojourn time in the first queue is Erlang distributed, that is $S_1(n,m) \sim \Erl(n,\mu_1)$ with mean 
$\label{def:T_1}
T_1(n,m)= n / \mu_1 \ .
$
The total sojourn time can be computed recursively by applying a first step analysis, 
that leads to the following formula, 
\begin{equation}\label{difference equation T(n,m)}
T(n,m)=\frac{1}{\mu_1 + \mu_2}+\frac{\mu_1}{\mu_1 +
\mu_2}T(n-1,m+1)+\frac{\mu_2}{\mu_1 + \mu_2}T(n,m-1),\ \
n,\, m > 0.
\end{equation}
The second term in the right hand side of \eqref{difference equation T(n,m)} considers a potential departure from the first queue 
and the last term a potential departure from the second queue. These events occur  with probability $\mu_\qix/(\mu_1+\mu_2)$, $\qix=1,2$ respectively.
To complete the recursion the following boundary conditions are needed
\begin{equation}\label{T(0,m) and T(m,0)}
T(0,m)=\frac{m}{\mu_2} \ ; \quad 
\quad 
T(n+1,0)=\frac{1}{\mu_1}+T(n,1) 
,\ \
n,\, m > 0.
\end{equation}
Using \eqref{difference equation T(n,m)} we  get a recursive formula to compute $T_2(n,m)$ as shown in the following lemma.
\begin{lemma}\label{lm:rec:T2}
 The expected sojourn time at the second queue, $T_2(n,m)$, can be computed with the following 
recursive formula
\begin{equation}  \label{rec T_2(n,m)}
T_2(n,m) =   \left( \frac{\mu_2}{\mu_1 + \mu_2}\right)^m T_2(n-1,1) 
+  \frac{\mu_1}{\mu_1 + \mu_2} \sum_{k=0}^{m-1} \left( \frac{\mu_2}{\mu_1 + \mu_2}\right)^k 
	T_2(n-1,m+1-k) 
\end{equation}
valid for $n>0$ and $T_2(0,m) = m / \mu_2$, with $m\geq0$.
\end{lemma}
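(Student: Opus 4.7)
The plan is to reduce the two-dimensional recursion \eqref{difference equation T(n,m)} to a cleaner recursion involving only $T_2$, and then iterate that cleaner recursion in the variable $m$ to produce the geometric sum.

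First I would observe that since $T_1(n,m)=n/\mu_1$ is independent of $m$, a direct substitution shows that $T_1$ itself satisfies the full recursion \eqref{difference equation T(n,m)}: indeed
\[
\frac{1}{\mu_1+\mu_2}+\frac{\mu_1}{\mu_1+\mu_2}\cdot\frac{n-1}{\mu_1}+\frac{\mu_2}{\mu_1+\mu_2}\cdot\frac{n}{\mu_1}=\frac{n}{\mu_1}.
\]
Subtracting this identity from \eqref{difference equation T(n,m)} yields, for $n,m>0$,
\[
T_2(n,m)=\frac{\mu_1}{\mu_1+\mu_2}\,T_2(n-1,m+1)+\frac{\mu_2}{\mu_1+\mu_2}\,T_2(n,m-1). \qquad(\ast)
\]
Next, I would extract the boundary condition $T_2(n,0)=T_2(n-1,1)$ from \eqref{T(0,m) and T(m,0)}: subtracting $T_1(n+1,0)=(n+1)/\mu_1$ from both sides of $T(n+1,0)=1/\mu_1+T(n,1)$ and using $T_1(n,1)=n/\mu_1$ gives exactly this relation.

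With $(\ast)$ in hand and writing $p=\mu_1/(\mu_1+\mu_2)$, $q=\mu_2/(\mu_1+\mu_2)$, I would finish by induction on $m$ (for fixed $n>0$): the base case $m=0$ is precisely the boundary condition derived above (the sum in the claimed formula is empty and $q^0=1$). For the inductive step, I would apply $(\ast)$ once to get
\[
T_2(n,m)=p\,T_2(n-1,m+1)+q\,T_2(n,m-1),
\]
then substitute the inductive hypothesis for $T_2(n,m-1)$. Collecting the resulting terms shifts the index of the geometric sum by one and increases the exponent on the boundary term by one, reproducing the stated formula.

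No serious obstacle is expected; the only subtlety is making sure the bookkeeping in the iteration in $(\ast)$ matches the exact index shift $m+1-k$ inside the sum, which is why I favor a clean induction over an unrolling argument. The boundary condition $T_2(n,0)=T_2(n-1,1)$ is what makes the tail term $q^m T_2(n,0)$ collapse into the $T_2(n-1,1)$ prefactor appearing in the claim.
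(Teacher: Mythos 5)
Your proposal is correct and follows essentially the same route as the paper: reduce \eqref{difference equation T(n,m)} to the homogeneous recursion $T_2(n,m)=\tfrac{\mu_1}{\mu_1+\mu_2}T_2(n-1,m+1)+\tfrac{\mu_2}{\mu_1+\mu_2}T_2(n,m-1)$ together with the boundary relation $T_2(n,0)=T_2(n-1,1)$, then conclude by induction on $m$. You merely spell out two steps the paper leaves implicit, namely that $T_1(n,m)=n/\mu_1$ itself satisfies the full recursion (so the subtraction is legitimate) and the index bookkeeping in the induction.
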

\begin{proof}
By \eqref{difference equation T(n,m)}, we get that $T_2(n,m)$ satisfies the following recursive equation 
$$T_2(n,m) =  \frac{\mu_1}{\mu_1 + \mu_2}T_2(n-1,m+1)+\frac{\mu_2}{\mu_1 + \mu_2}T_2(n,m-1),\ \
n,\, m > 0, $$
and by \eqref{T(0,m) and T(m,0)}, similar boundary conditions are satisfied. By induction argument it is then straightforward to verify that
\eqref{rec T_2(n,m)} holds true.
\end{proof}
The following lemma characterizes the conditions under which the $T$-functions are monotone non-decreasing in the variable $n$.
These conditions are important for the analysis of Section \ref{sc:partial-total-queue}.
\begin{lemma}\label{lm:dec:T}
The functions $T_1(n,m)$ and $T(n,k-n)$ are  non decreasing in $n$.
The function $T_2(n,m)$ is non decreasing in $n$ if and only if $\mu_1 \geq \mu_2$.
\end{lemma}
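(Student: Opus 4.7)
The plan is to handle $T_1$ by inspection and reduce the two non-trivial statements to one closed-form calculation at $n=1$, then propagate by induction via non-negative combinations of previously-proved differences.

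The identity $T_1(n,m)=n/\mu_1$ settles the first claim at once. The key preparatory step is to iterate the recursion of Lemma \ref{lm:rec:T2} starting from the boundary $T_2(0,m) = m/\mu_2$: setting $r := \mu_2/(\mu_1+\mu_2)$ and summing the resulting geometric-type series yields the closed forms
\[
T_2(1,m) = \frac{m+1}{\mu_2} - \frac{1-r^m}{\mu_1}, \qquad T(1,m) = \frac{r^m}{\mu_1} + \frac{m+1}{\mu_2},
\]
which will supply all the needed base cases.

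For $T(n,k-n)$, set $f_k(n) := T(n,k-n)$. By \eqref{difference equation T(n,m)}, for $1 \le n \le k-1$,
\[
f_k(n) = \frac{1}{\mu_1+\mu_2} + \frac{\mu_1}{\mu_1+\mu_2}\,f_k(n-1) + \frac{\mu_2}{\mu_1+\mu_2}\,f_{k-1}(n),
\]
while \eqref{T(0,m) and T(m,0)} yields $f_k(k) - f_k(k-1) = 1/\mu_1 \ge 0$. Subtracting consecutive instances of the recursion exhibits $f_k(n) - f_k(n-1)$ as a non-negative linear combination of $f_k(n-1) - f_k(n-2)$ and $f_{k-1}(n) - f_{k-1}(n-1)$, closing a double induction (outer on $k$, inner on $n$) provided the base case $f_k(1) \ge f_k(0)$ holds. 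For this, the preparatory formula gives $f_k(1) = T(1,k-1) = k/\mu_2 + r^{k-1}/\mu_1 = f_k(0) + r^{k-1}/\mu_1 \ge f_k(0)$.

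For $T_2$, the preparatory formula rewrites as
\[
T_2(1,m) - T_2(0,m) = \frac{\mu_1 - \mu_2}{\mu_1\mu_2} + \frac{r^m}{\mu_1}.
\]
If $\mu_1 < \mu_2$ the first summand is a strictly negative constant while the second tends to $0$, so for $m$ large enough the difference is negative: this proves the \emph{only if} direction. If $\mu_1 \ge \mu_2$ both summands are non-negative for every $m$, which gives the base case $n=0$ of an induction on $n$. The inductive step rests on the identity
\[
T_2(n{+}1,m) - T_2(n,m) = r^m\bigl[T_2(n,1) - T_2(n-1,1)\bigr] + (1-r) \sum_{k=0}^{m-1} r^k\bigl[T_2(n,m{+}1{-}k) - T_2(n-1,m{+}1{-}k)\bigr],
\]
obtained by subtracting \eqref{rec T_2(n,m)} at consecutive values of $n$; each bracket is non-negative by the inductive hypothesis. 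The main obstacle is spotting the compact closed form of $T_2(1,m)$; once it is in hand, both the induction base and the counterexample drop out immediately.
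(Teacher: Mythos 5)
Your proposal is correct. For $T_1$ and for $T_2$ it coincides in substance with the paper's proof: the identity you derive, $T_2(1,m)-T_2(0,m)=\frac{\mu_1-\mu_2}{\mu_1\mu_2}+\frac{r^m}{\mu_1}$ with $r=\mu_2/(\mu_1+\mu_2)$, is exactly the paper's expression $\Delta_1 T_2(0,m)=\frac{1}{\mu_2}\bigl(\frac{\alpha-1+(\alpha+1)^{-m}}{\alpha}\bigr)$ with $\alpha=\mu_1/\mu_2$ rewritten, and both proofs then propagate non-negativity through the difference recursion \eqref{rec Delta T_2(n,m)} whose coefficients are positive, and obtain the ``only if'' direction by letting $m\to\infty$ in the base case. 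Where you genuinely diverge is the monotonicity of $T(n,k-n)$: the paper explicitly mentions the inductive route (proving $T(n+1,m)\geq T(n,m+1)$ from \eqref{difference equation T(n,m)}--\eqref{T(0,m) and T(m,0)}) but declines it in favour of a coupling argument, whereas you carry the induction out. Your double induction is sound: the difference recursion
\[
f_k(n)-f_k(n-1)=\tfrac{\mu_1}{\mu_1+\mu_2}\bigl[f_k(n-1)-f_k(n-2)\bigr]+\tfrac{\mu_2}{\mu_1+\mu_2}\bigl[f_{k-1}(n)-f_{k-1}(n-1)\bigr]
\]
holds for $2\leq n\leq k-1$, the top step $f_k(k)-f_k(k-1)=1/\mu_1$ comes from the boundary condition, and the base case $f_k(1)-f_k(0)=r^{k-1}/\mu_1\geq 0$ follows from your (correct, I checked it) closed form $T(1,m)=\frac{m+1}{\mu_2}+\frac{r^m}{\mu_1}$. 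The trade-off is clear: the coupling argument is shorter, avoids any closed-form computation, and explains probabilistically why the inequality holds, while your version is entirely elementary and self-contained, at the price of the explicit evaluation of $T_2(1,m)$ --- a computation you need anyway for the $T_2$ base case, so nothing is wasted. Only cosmetic caveats: state explicitly that the recursion \eqref{rec T_2(n,m)} also covers $m=0$ (where it reduces to $T_2(n,0)=T_2(n-1,1)$, so $\Delta_1 T_2(n,0)\geq 0$ is included), and note that $T(1,0)=1/\mu_1+T(0,1)$, which you use for $k=1$, holds even though the paper states the boundary condition for $n>0$.
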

\begin{proof}
The statement is obvious for $T_1(n,m)$ that does not depend on $m$.

One way to show that the function $T(n,k-n)$ is non decreasing in $n$, for $n \leq k$ is by proving that $T(n+1,m) \geq T(n,m+1)$ by induction using equations (\ref{difference equation T(n,m)})--(\ref{T(0,m) and T(m,0)}). 
We prefer to use a coupling argument. 
Using the same probability space, we construct two networks starting respectively with 
$(n+1,m)$ and $(n,m+1)$ initial users. The proof follows by comparing 
the waiting times of the customers that are the last ones in the first queue of both networks,
and showing that the one in the former network waits more than the corresponding one in the latter.
To construct the coupling we assume that the service times for all customers are the same in both
networks but we move the customer in service at the first queue of the first network at the end of the queue of the second node of  the second network. Since the exit times are ordered by the FIFO discipline and because the moved customer reduces its sojourn time by her service time in the first node, the result holds. 

Finally to show that $T_2(n,m)$ is non decreasing in $n$ we prove that  $\Delta_1 T_2(0,m) \geq 0$ for all $m$ where
$\Delta_1 T_2(n,m) = T_2(n+1, m) - T_2(n,m)$.
From \eqref{rec T_2(n,m)}, the following holds for any $n>0$ and $m\geq0$,
\begin{equation}  \label{rec Delta T_2(n,m)}
\Delta_1 T_2(n,m) =   \left( \frac{\mu_2}{\mu_1 + \mu_2}\right)^m \Delta_1 T_2(n-1,1) 
+  \frac{\mu_1}{\mu_1 + \mu_2} \sum_{k=0}^{m-1} \left( \frac{\mu_2}{\mu_1 + \mu_2}\right)^k 
	\Delta_1 T_2(n-1,m+1-k) \ .
\end{equation}
If $\Delta_1 T_2(0,m) \geq 0$ the same holds for $n>0$ as all the coefficients in (\ref{rec Delta T_2(n,m)}) are
positive. In the opposite case $T_2(n,m)$ is clearly decreasing for some value of $(n,m)$.
Let  $\alpha = \mu_1/\mu_2$, one can check that
$$
\Delta_1 T_2(0,m) 
 = \frac{1}{\mu_2}\left(\frac{\alpha-1+(\alpha+1)^{-m}}{\alpha}\right)
\ .
$$
The quantity above is decreasing in $m$. To check that it would be non negative for any value of $m$ we take $m\to\infty$ and get the 
required condition $\alpha \geq 1$.
\end{proof}
\section{Expected sojourn times under the K-strategy}\label{sc:dist-total-queue}
We assume that all arriving customers receive the partial information about the state of the network
and decide to join only if the number of customers in the system, say $k$, is less then a given threshold $K\geq0$.
%
%
Under the $K$--strategy  the tandem network behaves as a
\emph{semiopen} Jackson network, see \citep[Section 2.3]{chen:yao:2001}. 
Let $Q^*_\qix$ be the stationary random number of customers at node $\qix$ 
and $Q^*= Q^*_1+Q^*_2$ be the stationary total number of customers
in the queue.
The stationary distribution is given by, see \citep[Theorem 2.5]{chen:yao:2001},
\begin{equation}\label{K.strat.produc.form}
 \pi_K(n,m) = \Pr_K(Q^*_1 = n , Q^*_2  = m )  = c_K \, \rho_1^n \rho_2^m,\ \ n+m\leq K
\end{equation}
where $\rho_\qix=\lambda/\mu_\qix$ and 
 $c_K^{-1}=\sum_{n+m\leq K} \rho_1^n \rho_2^m$ is the normalization constant.

Assuming $n \le K$, the conditional probability  $\Pr_K(Q^*_\qix = n | Q^* = k) = \rho_l^n \rho_{3-l}^{k-n} / \sum_{h=0}^k \rho_l^h \rho_{3-l}^{k-h}$ does not depend on $K$.
Let $p_\qix(n|k)=\Pr_K(Q^*_\qix = n| Q^* = k)$, after algebraic manipulation, we get 
\begin{equation}\label{cond.prob}
  p_\qix(n|k) =
\left\{ \begin{array}{ll}
 \mu_l^{k-n} \mu_{3-l}^n (\mu_1 - \mu_2)/(\mu_1^{k+1} - \mu_2^{k+1}) & \mu_1 \not= \mu_2 \\ \\
1/(1 + k) & \mu_1=\mu_2 
\end{array} \right. 
\ .
\end{equation}
The independence from $K$  allows to consider  the random variables 
$Q^*_\qix(k)$, $\qix\in\{1,2\}$, having distributions $p_\qix(\cdot|k)$ and not depending on the pure threshold strategy employed by all customers.
\begin{remark}
The assumption that the rest of population follows a threshold policy is necessary in order to have the steady
state distribution expressed in the form given in \eqref{cond.prob}.
\end{remark}

Let us define by  $T_{\qix}(k)=\E[S_\qix|Q^* = k]$ the expected sojourn time at queue $\qix$ of a tagged
customer that enters a system containing  $k$ customers.
\begin{lemma}\label{lem:T}
Assuming $\mu_1\not=\mu_2$, it holds that 
\begin{eqnarray}
T_1(k)
&=& \frac{1}{\mu_1-\mu_2}-\frac{k+1}{\mu_1}\;\frac{\mu_2^{k+1}}{\mu_1^{k+1}-\mu_2^{k+1}} \label{T_1(k)}\\
T_2(k)
&=& \left (1-\frac{\mu_2}{\mu_1} \right) \frac{\mu_1^{k+1}}{\mu_1^{k+1}-\mu_2^{k+1}}
\sum_{n=0}^k T_2(n+1,k-n)\left(\frac{\mu_2}{\mu_1}\right)^n  \label{T_2(k)}
\end{eqnarray}
and for $\mu_1=\mu_2$, $T_1(k)= 1/ \mu_1 (1+ k/2)$ and $T_2(k)= 1/ (k+1) \sum_{n=0}^k T_2(n+1,k-n)$. 
\end{lemma}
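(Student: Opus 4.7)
The plan is to reduce both formulas to a single conditioning step and then use the explicit conditional distribution from \eqref{cond.prob}. The key observation is that a tagged customer who arrives while the total queue length equals $k$ finds $n$ customers in queue~1 with probability $p_1(n|k)$; after she joins, the state becomes $(n+1,k-n)$, so her sojourn time in queue $\qix$ has mean $T_\qix(n+1,k-n)$ as defined in Section~\ref{sc:fully-observable}. Conditioning yields
\begin{equation*}
T_\qix(k) \;=\; \sum_{n=0}^{k} T_\qix(n+1,k-n)\, p_1(n|k),\qquad \qix=1,2.
\end{equation*}

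For $T_2(k)$ the formula \eqref{T_2(k)} is then immediate: I would just substitute the first case of \eqref{cond.prob}, multiply numerator and denominator by $\mu_1^{-(k+1)}$ to write the weight as $(\mu_2/\mu_1)^n (1-\mu_2/\mu_1)\mu_1^{k+1}/(\mu_1^{k+1}-\mu_2^{k+1})$, and pull the constant prefactor out of the sum. The case $\mu_1=\mu_2$ follows in one line from $p_1(n|k)=1/(k+1)$.

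For $T_1(k)$ I would use the fully explicit identity $T_1(n+1,k-n)=(n+1)/\mu_1$ from Section~\ref{sc:fully-observable}, so the sum reduces to $(1+\E[Q_1^*(k)])/\mu_1$. Writing $\rho=\mu_2/\mu_1$ and $p_1(n|k)=\rho^n(1-\rho)/(1-\rho^{k+1})$, the expectation $\E[Q_1^*(k)]$ is a standard truncated geometric moment that I would evaluate via $\sum_{n=0}^{k} n\rho^{n} = \rho\frac{d}{d\rho}\bigl((1-\rho^{k+1})/(1-\rho)\bigr)$. Collecting terms and reintroducing $\mu_1$ and $\mu_2$ produces the right-hand side of \eqref{T_1(k)}; the $\mu_1=\mu_2$ case is just $\sum_{n=0}^k(n+1)/((k+1)\mu_1)=(k+2)/(2\mu_1)$.

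The whole argument is therefore a one-step conditioning on $Q_1^*(k)$ followed by direct substitution. The only real work is the algebraic manipulation needed to recognize the right-hand side of \eqref{T_1(k)} in the closed form of the truncated geometric moment; this is bookkeeping rather than a genuine difficulty. No inductive argument is needed, since the conditional distribution \eqref{cond.prob} is independent of $K$ (as noted in Section~\ref{sc:dist-total-queue}), so the mean sojourn times $T_\qix(n+1,k-n)$ of Section~\ref{sc:fully-observable} can be used directly.
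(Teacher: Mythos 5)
Your proposal is correct and follows essentially the same route as the paper: both condition on the number $Q^*_1(k)$ found in the first queue using the distribution \eqref{cond.prob}, write $T_\qix(k)=\sum_{n=0}^k T_\qix(n+1,k-n)\,p_1(n|k)$, use $T_1(n+1,k-n)=(n+1)/\mu_1$, and reduce \eqref{T_1(k)} to evaluating a truncated geometric moment. The only difference is cosmetic (you differentiate the geometric sum where the paper quotes the closed form of $\sum_n (n+1)\mu_1^{k-n}\mu_2^n$ directly).
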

\begin{proof}
By definition $T_1(k)= 1/\mu_1 \, \sum_{n=0}^k (n+1) \, p_1(n|k)$, therefore \eqref{cond.prob} with $\mu_1 \not = \mu_2$ implies
\begin{eqnarray*}\label{tot inf:expression for T_1(k)}
T_1(k)
&=& \frac{1}{\mu_1}\frac{\mu_1-\mu_2}{\mu_1^{k+1}-\mu_2^{k+1}}
     \sum_{n=0}^k (n+1)\mu_1^{k-n}\mu_2^n \\
&=& \frac{1}{\mu_1} \frac{\mu_1^k(\mu_1-\mu_2)}{\mu_1^{k+1}-\mu_2^{k+1}}
\frac{\mu_1^{2+k} -(2+k) \mu_1\mu_2^{1+k}+(k+1)\mu_2^{2+k}}{\mu_1^k(\mu_1-\mu_2)^2} 
\ .
\end{eqnarray*}
Simplifying the expression above we get \eqref{T_1(k)}.
The formula for  $T_2(k)$ is obtained similarly by the expression $T_2(k)=\sum_{n=0}^k  T_2(n+1,k-n) \, p_1(n|k)$.
The results for $\mu_1=\mu_2$ can be obtained in a similar way or more directly by noticing that in this case 
the random variables $Q^*_\qix$ are discrete uniformly distributed on $\{0,1,\ldots,k\}$. One could also compute the limit of the expressions \eqref{T_1(k)} and \eqref{T_2(k)} as $\mu_1\to\mu_2$.
\end{proof}

\section{Threshold equilibrium stategy}\label{sc:partial-total-queue}
By Lemma \ref{lem:T} the expected profit of a tagged customer receiving the information $k$, given in \eqref{eq:profit},
does not depend on the strategy $K$ employed by the rest of customers. We can therefore compute it as
\begin{equation}\label{eq:profit.tot}
P(k) = R - C_1 \, T_1(k) - C_2\, T_2(k) \ .
\end{equation}
The tagged customer decides to enter only if $P(k) \geq 0$.
In the sequel we show under what conditions the expected net profit function is decreasing in $k$,
moreover since this function is constant with respect to the strategy $K$, 
we obtain that the equilibrium strategy is in addition a dominant strategy in the class of threshold strategies.
Before stating the main result  we require the following Lemma on the stochastic monotonicity of the random variables $Q^*_\qix(k)$.
\begin{lemma}\label{part-tor:stoc.monot}
 The random variables  $Q^*_\qix(k)$ are increasing stochastically ordered in $k\geq0$.
\end{lemma}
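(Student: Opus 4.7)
The plan is to compute the survival function $\Pr(Q^*_\ell(k)>n)$ in closed form from \eqref{cond.prob} and verify monotonicity in $k$ by a direct (and brief) algebraic factorisation. By symmetry, it suffices to treat $\ell=1$; the case $\ell=2$ is obtained by swapping $\mu_1\leftrightarrow\mu_2$.

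First I would handle the easy case $\mu_1=\mu_2$: here $Q^*_1(k)$ is uniform on $\{0,1,\ldots,k\}$, and $\Pr(Q^*_1(k)>n)=\max(0,(k-n)/(k+1))$ is plainly non-decreasing in $k$ for each fixed $n$, giving $Q^*_1(k)\leqSt Q^*_1(k+1)$.

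For $\mu_1\neq\mu_2$, I would first sum the geometric series in \eqref{cond.prob} to obtain, for $0\leq n\leq k$,
\begin{equation*}
\Pr(Q^*_1(k)\leq n)=\frac{\mu_1^{k-n}(\mu_1^{n+1}-\mu_2^{n+1})}{\mu_1^{k+1}-\mu_2^{k+1}},\qquad
\Pr(Q^*_1(k)>n)=\frac{\mu_2^{n+1}(\mu_1^{k-n}-\mu_2^{k-n})}{\mu_1^{k+1}-\mu_2^{k+1}},
\end{equation*}
(the second identity following by subtraction from $1$). The boundary cases $n\geq k$ are trivial: both $\Pr(Q^*_1(k)>n)$ and $\Pr(Q^*_1(k+1)>n)$ vanish except for $n=k$, where the first is zero and the second is $p_1(k+1|k+1)\geq 0$.

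The core step is then to show, for $0\leq n<k$, that
\begin{equation*}
\frac{\mu_2^{n+1}(\mu_1^{k+1-n}-\mu_2^{k+1-n})}{\mu_1^{k+2}-\mu_2^{k+2}}\;\geq\;\frac{\mu_2^{n+1}(\mu_1^{k-n}-\mu_2^{k-n})}{\mu_1^{k+1}-\mu_2^{k+1}}.
\end{equation*}
Since the two denominators have the same sign (both positive if $\mu_1>\mu_2$, both negative if $\mu_1<\mu_2$), cross–multiplication with the corresponding sign flips reduces the claim to showing that the difference
\begin{equation*}
(\mu_1^{k+1-n}-\mu_2^{k+1-n})(\mu_1^{k+1}-\mu_2^{k+1})-(\mu_1^{k-n}-\mu_2^{k-n})(\mu_1^{k+2}-\mu_2^{k+2})
\end{equation*}
is non-negative. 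Expanding and grouping the four cross-terms, I expect the expression to collapse to
\begin{equation*}
(\mu_1\mu_2)^{k-n}(\mu_1-\mu_2)(\mu_1^{n+1}-\mu_2^{n+1}).
\end{equation*}
The two bracketed factors carry the same sign, so the product is non-negative for any $\mu_1,\mu_2>0$, independent of which is larger. This finishes the monotonicity in $k$ and hence the stochastic ordering.

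The main (and only) obstacle is carrying out the factorisation cleanly; everything else is a line of geometric-sum bookkeeping. An alternative that avoids even this calculation is to observe that, when $\mu_1>\mu_2$, the distribution $p_1(\cdot|k)$ is a geometric with ratio $\mu_2/\mu_1$ truncated to $\{0,\ldots,k\}$, and truncated geometrics are classically $\geqSt$-ordered in the truncation level (since the conditional tail $\Pr(G>n\mid G\leq k)$ increases in $k$); the symmetric case $\mu_1<\mu_2$ would then follow by swapping the roles of the nodes (equivalently, applying the same argument to $Q^*_2$ and using $Q^*_1(k)=k-Q^*_2(k)$ in conjunction with the already-established symmetric statement for $\ell=2$). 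I would probably mention this interpretation as a remark but present the direct algebraic proof above, as it covers both regimes uniformly.
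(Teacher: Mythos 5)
Your proof is correct, but it takes a genuinely different route from the paper. The paper proves the stronger \emph{likelihood-ratio} ordering: since $p_\qix(\cdot\,|k)$ and $p_\qix(\cdot\,|k+1)$ are both proportional to $\mu_\qix^{-n}\mu_{3-\qix}^{n}$ on the common support $\{0,\dots,k\}$, the cross-product inequality $\Pr\{Q^*_\qix(k+1)=n+1\}\Pr\{Q^*_\qix(k)=n\}\geq \Pr\{Q^*_\qix(k+1)=n\}\Pr\{Q^*_\qix(k)=n+1\}$ holds with equality for $n<k$ and strictly at $n=k$ (where the right side vanishes), and likelihood-ratio ordering implies the stochastic ordering. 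That is essentially a two-line argument, and it is the same mechanism underlying your closing remark about truncated geometrics being ordered in the truncation level. Your argument instead sums the geometric series to get the survival function in closed form and verifies monotonicity in $k$ by cross-multiplication; I checked the factorisation and it does collapse to $(\mu_1\mu_2)^{k-n}(\mu_1-\mu_2)(\mu_1^{n+1}-\mu_2^{n+1})\geq 0$ as you claim, the sign discussion of the denominators is handled correctly, and the boundary and $\mu_1=\mu_2$ cases are fine. What your version buys is self-containedness (no appeal to the implication from likelihood-ratio to stochastic order) and an explicit formula for $\Pr(Q^*_1(k)\leq n)$ that could be reused elsewhere; what it costs is an algebraic expansion that the paper's proof avoids entirely. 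Either proof would be acceptable; the paper's is shorter.
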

\begin{proof}
In order to show that $Q^*_\qix(k+1) \geqSt Q^*_\qix(k)$ it is enough to prove the
stronger condition given by the likelihood ratio ordering, see \cite{chen:yao:2001}.
This last condition can be checked by proving that 
\begin{equation}\label{st.or}
\Pr\{Q^*_\qix(k+1) = n+1\} \Pr\{Q^*_\qix(k) = n\} \geq \Pr\{Q^*_\qix(k+1) = n\}
\Pr\{Q^*_\qix(k) = n+1\} 
\end{equation}
for any $n \geq0$. It easy to check that (\ref{st.or}) holds as equality for any $n < k$
and is a strict inequality for $n=k$ where the second term is $0$ and therefore the result holds
true.
\end{proof}
\begin{remark} We explicitly note that the result of Lemma \ref{part-tor:stoc.monot} is different from requiring
that the variables $Q^*_\qix$ are stochastically ordered with respect to the strategy $K$.
This result refers to non conditional quantities and could be proved by a coupling argument similar to the one used in \cite{sakuma:miyazawa:2005}. 
\end{remark}

Lemma \ref{part-tor:stoc.monot} implies the following monotonicity result to the mean sojourn
functions.

\begin{lemma}\label{part-tor:T.monot}
The functions $T_1(k)$ and $T(k)$ are non decreasing for all values of the ratio
$\mu_1/\mu_2$.
The function $T_2(k)$ is non decreasing when this ratio is greater than $1$. 
\end{lemma}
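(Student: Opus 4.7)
My plan is to treat the three claims separately, with Lemma~\ref{part-tor:stoc.monot} as the engine and an explicit likelihood-ratio coupling of the pairs $(Q_1^*(k),Q_2^*(k))$ and $(Q_1^*(k+1),Q_2^*(k+1))$ as the main tool. For $T_1(k)$ the argument is one line: $T_1(k)=\E[Q_1^*(k)+1]/\mu_1$ is the expectation of the non-decreasing function $n\mapsto(n+1)/\mu_1$ against the stochastically increasing family $\{Q_1^*(k)\}_k$.

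For $T(k)$, I would first note that from \eqref{cond.prob} the ratio $p_1(n|k+1)/p_1(n|k)$ is a constant $\alpha_{k+1}\in[0,1]$ on $\{0,\ldots,k\}$, which yields the coupling
\[
(Q_1^*(k+1),Q_2^*(k+1))\eqDist
\begin{cases}
(Q_1^*(k),\,Q_2^*(k)+1) & \text{w.p. } \alpha_{k+1},\\
(k+1,\,0) & \text{w.p. } 1-\alpha_{k+1}.
\end{cases}
\]
Subtracting $T(k)=\E[T(Q_1^*(k)+1,Q_2^*(k))]$ from the analogous expression for $T(k+1)$ splits $T(k+1)-T(k)$ into two non-negative pieces. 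The first, ``push one more into queue 2'', is non-negative by the trivial monotonicity of $T$ in its second argument. The second, ``collapse'', reduces to $T(k+2,0)\ge\E[T(Q_1^*(k)+1,Q_2^*(k))]$; by Lemma~\ref{lm:dec:T} the right-hand side is bounded above by $T(k+1,0)$ (the maximum of $T$ on the anti-diagonal $\{n+m=k+1\}$), and then trivially $T(k+2,0)\ge T(k+1,0)$.

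For $T_2(k)$ under $\mu_1\ge\mu_2$ I would use the mirror coupling obtained by swapping the roles of the queues:
\[
(Q_1^*(k+1),Q_2^*(k+1))\eqDist
\begin{cases}
(Q_1^*(k)+1,\,Q_2^*(k)) & \text{w.p. } \beta_{k+1},\\
(0,\,k+1) & \text{w.p. } 1-\beta_{k+1}.
\end{cases}
\]
The analogous decomposition of $T_2(k+1)-T_2(k)$ leaves a first bracket $\E[T_2(Q_1^*(k)+2,Q_2^*(k))-T_2(Q_1^*(k)+1,Q_2^*(k))]$ which is non-negative by precisely the part of Lemma~\ref{lm:dec:T} requiring $\mu_1\ge\mu_2$ (monotonicity of $T_2$ in its first argument). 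The second bracket reduces to $T_2(1,k+1)\ge\E[T_2(Q_1^*(k)+1,Q_2^*(k))]$.

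This last inequality is the main obstacle, and I would split it into two auxiliary facts. First, by induction on $m$ using Lemma~\ref{lm:rec:T2} and the boundary identity $T_2(n+1,0)=T_2(n,1)$, one shows $T_2(n+1,m-1)\le T_2(n,m)$, i.e.\ $T_2$ is non-increasing on every anti-diagonal; hence $\E[T_2(Q_1^*(k)+1,Q_2^*(k))]\le T_2(0,k+1)=(k+1)/\mu_2$. Second, from the single-step recursion for $T_2(1,m)$ the quantity $U(m):=T_2(1,m)-m/\mu_2$ satisfies
\[
U(m)=\frac{\mu_2}{\mu_1+\mu_2}\,U(m-1)+\frac{\mu_1-\mu_2}{(\mu_1+\mu_2)\mu_2},\qquad U(0)=\frac{1}{\mu_2},
\]
so that $U(m)\ge 0$ for all $m$ precisely when $\mu_1\ge\mu_2$. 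Combining these yields $T_2(1,k+1)\ge(k+1)/\mu_2\ge\E[T_2(Q_1^*(k)+1,Q_2^*(k))]$, closing the argument.
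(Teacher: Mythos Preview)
Your argument is correct and rests on the same two pillars as the paper's proof: the stochastic monotonicity of $Q_\ell^*(k)$ (Lemma~\ref{part-tor:stoc.monot}) and the pointwise monotonicity results of Lemma~\ref{lm:dec:T}. The paper proceeds more abstractly, writing for instance
\[
T_2(k+1)=\E\bigl[T_2\bigl(k+2-Q_2^*(k+1),\,Q_2^*(k+1)\bigr)\bigr]
\ge\E\bigl[T_2\bigl(k+1-Q_2^*(k+1),\,Q_2^*(k+1)\bigr)\bigr]
\ge\E\bigl[T_2\bigl(k+1-Q_2^*(k),\,Q_2^*(k)\bigr)\bigr]=T_2(k),
\]
invoking first $T_2(n+1,m)\ge T_2(n,m)$ and then the standard implication ``$Q_2^*(k+1)\geqSt Q_2^*(k)$ plus $m\mapsto T_2(k+1-m,m)$ non-decreasing''. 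Your explicit likelihood-ratio coupling is exactly a concrete realization of that second step; the ``collapse'' term in your decomposition is what carries the mass that the coupling places at the new extreme point.

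Where your write-up goes beyond the paper is in actually \emph{proving} the anti-diagonal monotonicity $T_2(n+1,m-1)\le T_2(n,m)$, which the paper simply asserts (``the fact that $T_2(k-m,m)$ is non-decreasing in $m$''). Your route via $D(n,m):=T_2(n,m)-T_2(n+1,m-1)$ is the right one, but note that the induction should be on $n$, not on $m$: Lemma~\ref{lm:rec:T2} expresses $T_2(n,\cdot)$ entirely in terms of $T_2(n-1,\cdot)$, so one gets $D(n,m)$ as a non-negative combination of the values $D(n-1,\cdot)$, with base case $D(0,m)=1/\mu_2-U(m-1)\ge 0$ (which holds for all $\mu_1,\mu_2$ since $U$ decreases from $1/\mu_2$). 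Your final inequality $T_2(1,k+1)\ge (k+1)/\mu_2$ is precisely $U(k+1)\ge 0$, i.e.\ $\Delta_1 T_2(0,k+1)\ge 0$, which is already the content of Lemma~\ref{lm:dec:T} under $\mu_1\ge\mu_2$; so that step could be shortened by a direct citation.
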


\begin{proof}
The function $T_1(k)$ is non decreasing by Lemma \ref{part-tor:stoc.monot} and because $T_1(n,m)$ is non decreasing in $n$ as well.
For $T(k)$, it follows by the following
\begin{eqnarray}
T(k+1) &=& \E[T(Q^*_1(k+1),k+1-Q^*_1(k+1))] \nonumber \\
 &\geq&
  \E[T(Q^*_1(k+1),k-Q^*_1(k+1))]
 \geq
  \E[T(Q^*_1(k),k-Q^*_1(k))] = T(k) 
\end{eqnarray}
where in the last equality we used the fact that $T(n,k-n)$ is non decreasing, see Lemma \ref{lm:dec:T},
and the stochastic monotonicity of the variables $Q^*_1(k)$, proved in Lemma \ref{part-tor:stoc.monot}. 
A similar argument works for the function $T_2(k)$. Assuming $\mu_1\geq\mu_2$, we have
\begin{eqnarray}
T_2(k+1) &=& \E[T_2(k+1-Q^*_2(k+1),Q^*_2(k+1))] \nonumber \\
 &\geq&
  \E[T_2(k-Q^*_2(k+1),Q^*_2(k+1))]
 \geq
  \E[T_2(k-Q^*_2(k),Q^*_2(k))] = T_2(k) \ .
\end{eqnarray}
The first inequality follows by Lemma \ref{lm:dec:T} under the assumption on the service rates,
the second inequality follows by the monotonicity of $Q^*_2(k)$, shown in Lemma
\ref{part-tor:stoc.monot}, and  the fact that $T_2(k-m,m)$ is non decreasing in $m$  for any fixed $k>0$.
\end{proof}

\begin{corollary}\label{cor:mon.profit}
If $\mu_1>\mu_2$ or if $C_1 \geq C_2$ the expected net profit, $P(k)$, is  non increasing .
\end{corollary}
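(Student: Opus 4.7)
The plan is to reduce the monotonicity of $P(k) = R - C_1 T_1(k) - C_2 T_2(k)$ to the monotonicity of $C_1 T_1(k) + C_2 T_2(k)$, and then exploit the three monotonicity facts already established in Lemma \ref{part-tor:T.monot}: $T_1(k)$ and $T(k)$ are non-decreasing for all values of $\mu_1/\mu_2$, while $T_2(k)$ is non-decreasing only when $\mu_1 \geq \mu_2$.

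The first case, $\mu_1 > \mu_2$, is immediate: since both $T_1(k)$ and $T_2(k)$ are non-decreasing by Lemma \ref{part-tor:T.monot}, and $C_1, C_2 \geq 0$, the non-negative combination $C_1 T_1(k) + C_2 T_2(k)$ is also non-decreasing, whence $P(k)$ is non-increasing.

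The second case, $C_1 \geq C_2$, is more delicate because here $T_2(k)$ may actually be decreasing (as the numerical table suppressed in the excerpt hints). The key observation is the algebraic rewriting
\begin{equation*}
C_1 T_1(k) + C_2 T_2(k) = C_2\bigl(T_1(k) + T_2(k)\bigr) + (C_1 - C_2)\, T_1(k) = C_2\, T(k) + (C_1 - C_2)\, T_1(k),
\end{equation*}
which expresses the cost as a non-negative combination (since $C_2 \geq 0$ and $C_1 - C_2 \geq 0$) of two functions that are both non-decreasing by Lemma \ref{part-tor:T.monot}, regardless of the ratio $\mu_1/\mu_2$. The conclusion follows.

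I expect no serious obstacle here: the decomposition $C_1 T_1 + C_2 T_2 = C_2 T + (C_1-C_2) T_1$ is the only nontrivial observation, and its role is precisely to bypass the possible non-monotonicity of $T_2(k)$ when $\mu_1 < \mu_2$ by trading a $C_2 T_2$ cost for a $C_2 T$ cost at the price of an extra $(C_1 - C_2) T_1$ term, both of which are safely monotone.
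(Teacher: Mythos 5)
Your proof is correct and follows essentially the same route as the paper: the case $\mu_1>\mu_2$ is handled directly via Lemma \ref{part-tor:T.monot}, and the case $C_1\geq C_2$ uses exactly the paper's rewriting $P(k)=R-(C_1-C_2)\,T_1(k)-C_2\,T(k)$ to bypass the possible non-monotonicity of $T_2(k)$. No gaps.
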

\begin{proof}
The result follows from Lemma \ref{part-tor:T.monot} if $\mu_1>\mu_2$.
If $C_1 \geq C_2$, it is enough to rewrite the profit function as 
$\label{tot inf:alt expression for ind prof}
P(k)
= R - (C_1 - C_2) \, T_1(k) - C_2 \, T(k)$,
and use the monotonicity of $T_1(k)$ and $T(k)$.
\end{proof}

Finally we state the main result that  gives the conditions to find the strategy that induces
the Nash equilibrium.

\begin{theorem}\label{thm:tot.inf}
When the information known by the arriving customers is the total number of customers in the network
only, then the equilibrium strategy is given by the threshold $K$ such that 
\begin{equation}\label{eq:pr.tot}
K = \argmin\{ k\in\N : \, P(k) < 0 \}\ .
\end{equation}
According to this strategy a tagged customer enters only if she finds less than $K$ customers
in the system. The $K$-strategy is a dominant strategy in the class of threshold strategies.
\end{theorem}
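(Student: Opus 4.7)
The plan is to exploit the decisive observation, buried in Lemma \ref{lem:T}, that the conditional sojourn times $T_1(k)$ and $T_2(k)$, and hence the net profit $P(k) = R - C_1 T_1(k) - C_2 T_2(k)$ in \eqref{eq:profit.tot}, depend only on the observation $k$ and are \emph{independent} of the threshold $K$ used by the rest of the population. This decouples the tagged customer's optimization from the strategies of the other users and reduces the equilibrium problem to a one-shot single-agent decision.

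Concretely, I would argue that, having observed $k$ users in the network, the tagged customer gets expected profit $P(k)$ from joining and $0$ from balking, so her unique best response is to join precisely when $P(k) \geq 0$. Next I would invoke Corollary \ref{cor:mon.profit}, which, under the standing hypothesis $\mu_1 > \mu_2$ or $C_1 \geq C_2$, gives the non-increasingness of $P(\cdot)$; combined with the assumption $R > C_1/\mu_1 + C_2/\mu_2$ (so that $P(0) > 0$), this exhibits a unique cutoff $K = \argmin\{k \in \N : P(k) < 0\}$ with the property $P(k) \geq 0$ if and only if $k < K$, provided this set is non-empty. The tagged customer's best response is therefore exactly the pure threshold strategy described in \eqref{eq:pr.tot}.

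For the equilibrium claim, this best response coincides with the common threshold $K$ already assumed for the rest of the population, so the choice is self-consistent and forms a Nash equilibrium within threshold strategies. For dominance, the very same derivation shows the best response is $K$ regardless of which threshold $K'$ the other users adopt, since $P(\cdot)$ does not depend on $K'$; hence $K$ weakly dominates every other pure threshold policy.

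The main obstacle is really the monotonicity of $P(k)$, already dispatched in Corollary \ref{cor:mon.profit}; once that and the strategy-independence from Lemma \ref{lem:T} are in hand, the remainder is essentially bookkeeping. One minor caveat worth flagging in the write-up is the degenerate case where $P(k) \geq 0$ for every $k$, which is possible when $\mu_1 > \mu_2$ because then both $T_1(k)$ and $T_2(k)$ remain bounded as $k \to \infty$; in that case the argmin is formally $+\infty$ and the equilibrium degenerates to ``every arriving customer joins''.
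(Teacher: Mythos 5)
Your core observations---that $P(k)$ is independent of the threshold used by the rest of the population (Lemma \ref{lem:T}), that this decouples the tagged customer's problem into a one-shot decision, and that self-consistency plus strategy-independence yield both the equilibrium and the dominance claims---are exactly the ones the paper's proof rests on. The genuine gap is that you invoke Corollary \ref{cor:mon.profit} as if ``$\mu_1 > \mu_2$ or $C_1 \geq C_2$'' were a standing hypothesis of the theorem; it is not. When $\mu_1 < \mu_2$ and $C_1 < C_2$, the function $T_2(k)$, and hence $P(k)$, need not be monotone, so your claimed ``unique cutoff with $P(k)\geq 0$ if and only if $k<K$'' can fail for $k>K$, and the tagged customer's pointwise best response need not be a threshold policy on the whole state space. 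As written, your argument proves the theorem only in the monotone case.

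The paper closes this case without monotonicity by an off-the-equilibrium-path argument: take $K$ to be the \emph{first} index with $P(K)<0$, so that by minimality alone $P(k)\geq 0$ for every $k<K$. If all other customers use the $K$-strategy, the ergodic system never holds more than $K$ customers, so the tagged arrival only ever observes $k\in\{0,\dots,K\}$; on those states ``join iff $k<K$'' is a best response, and whatever the strategy prescribes for $k>K$ is irrelevant because those states are never reached. Monotonicity enters only to upgrade the equilibrium to a subgame perfect one (and to make your if-and-only-if characterization literally true). You should either add this unreachable-states argument for the general case or restrict your proof explicitly to the hypotheses of Corollary \ref{cor:mon.profit}. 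Your caveat about the degenerate case $K=\infty$ is correct and is also acknowledged in the paper's proof.
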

\begin{proof}
Let the index $K$ be the one obtained by formula \eqref{eq:pr.tot}, including $K=\infty$.
We  show that the strategy $K$ is the best response against itself.
The system is always ergodic, therefore with no loss of generality we assume it starts empty.
Since all customers employ the $K$-strategy the tagged user will never find more then $K$ customers in the system
and according to \eqref{eq:pr.tot}, she will follow the same strategy, and the result follows.

The actions that the tagged customer may take for the values of $k>K$ are irrelevant as she will never find the system in these states.
However if the monotonicity conditions given in Lemma \ref{part-tor:T.monot} hold,
the $K$-strategy leads to a \textit{subgame perfect equilibrium}, see \cite{hassin:haviv:2003}.

The $K$-strategy is dominant because it is  the best response to any other possible threshold
strategy. This holds because the net profit function does not depend neither on the arrival rate nor
on the threshold employed by other customers. 
Assuming that the system is working under a pure strategy with a threshold different from $K$,
if at some point in time customers start to behave selfishly, they will all adopt the $K$-strategy.
If the monotonicity conditions of Lemma \ref{part-tor:T.monot} are not satisfied, this statement uses the fact that the Markov chain is ergodic and it hits almost surely the null state.
\end{proof}

The expression \eqref{eq:profit.tot} depends on the values of the function $T_2(n,m)$ given in \eqref{T_2(k)} and as such
we cannot expect to obtain a closed formula for the equilibrium threshold strategy.
However  we can always compute it numerically by  \eqref{T_2(k)}.

\section{Conclusions}
This works analyzes the equilibrium behavior of a tandem network when customers may choose the actions of balking or joining the system by taking into account economic considerations. This paper is the first to look at queues in series, and the surprising result is that if users only receive partial information on the status of the network, in particular the total number of customers in the system, a pure strategy exists. It may be the case that such result may be extended to more general network topologies, this will be the subject of future research.

\bibliographystyle{apalike}
\bibliography{sym-biblio-30092015-arxiv}
\end{document}